\def\@logofont{\footnotesize}
\def\@setaddresses{\par
  \nobreak \begingroup
  \footnotesize
  \def\author##1{\nobreak\addvspace\bigskipamount}%
  \def\\{\par\nobreak}%
  \interlinepenalty\@M
  \def\address##1##2{\begingroup
    \par\addvspace\bigskipamount\indent
    \@ifnotempty{##1}{(\ignorespaces##1\unskip) }%
    {\scshape\ignorespaces##2}\par\endgroup}%
  \def\curraddr##1##2{\begingroup
    \@ifnotempty{##2}{\nobreak\indent\curraddrname
      \@ifnotempty{##1}{, \ignorespaces##1\unskip}\/:\space
      ##2\par}\endgroup}%
  \def\email##1##2{\begingroup
    \@ifnotempty{##2}{\nobreak\indent\emailaddrname
      \@ifnotempty{##1}{, \ignorespaces##1\unskip}\/:\space
      \ttfamily##2\par}\endgroup}%
  \def\urladdr##1##2{\begingroup
    \def~{\char`\~}%
    \@ifnotempty{##2}{\nobreak\indent\urladdrname
      \@ifnotempty{##1}{, \ignorespaces##1\unskip}\/:\space
      \ttfamily##2\par}\endgroup}%
  \addresses
  \endgroup
}
\renewcommand*\subjclass[2][2010]{%
  \def\@subjclass{#2}%
  \@ifundefined{subjclassname@#1}{%
    \ClassWarning{\@classname}{Unknown edition (#1) of Mathematics
      Subject Classification; using '2000'.}%
  }{%
    \@xp\let\@xp\subjclassname\csname subjclassname@#1\endcsname
  }%
}
\newtheorem{theorem}{Theorem}[section]
\newtheorem*{theorem*}{Theorem}
\newtheorem{lemma}[theorem]{Lemma}
\theoremstyle{definition}
\theoremstyle{remark}
\newtheorem{remark}[theorem]{Remark}
\begin{document}
\title{Classifying Abelian Groups through Acyclic Matchings}

\thanks{\textbf{Keywords and phrases}: Acyclic matching property, cyclic group, torsion-free group.}
\thanks{\textbf{2020 Mathematics Subject Classification}: Primary 11B75; Secondary 05E16, 05D15.}

\author[M. Aliabadi and P. Taylor]{Mohsen Aliabadi$^{1}$ \and Peter Taylor$^2$}

\thanks{$^1$Department of Mathematics, University of California, San Diego, 
9500 Gilman Dr, La Jolla, CA 92093, USA. \url{maliabadisr@ucsd.edu}, ORCID: 0000-0001-5331-2540.\\
$^2$Independent Researcher, Valencia, Spain. \url{pjt33@cantab.net}, ORCID: 0000-0002-0556-5524.}

\begin{abstract}
The inquiry into identifying sets of monomials that can be eliminated from a generic homogeneous polynomial via a linear change of coordinates was initiated by E. K. Wakeford. This linear algebra problem prompted C. K. Fan and J. Losonczy to introduce the notion of acyclic matchings in the additive group $\mathbb{Z}^n$, subsequently extended to abelian groups by the latter author. Alon, Fan, Kleitman, and Losonczy established the acyclic matching property for $\mathbb{Z}^n$. This note aims to classify all abelian groups with respect to the acyclic matching property.
\end{abstract}

\maketitle

\section{Introduction}
Let \( (G,+) \) be an abelian group. Consider nonempty finite subsets \( A, B \subseteq G \). A {\it{matching}} from \( A \) to \( B \) is a bijection \( f : A \rightarrow B \) satisfying the condition \( a + f(a) \notin A \) for all \( a \in A \). If there exists a matching from $A$ to $B$, we say $A$ is {\it{matched}} to $B$. Obvious necessary conditions for the existence of a matching from \( A \) to \( B \) are \( |A| = |B| \) and \( 0 \notin B \). An abelian group \( G \) is said to possess the {\it{matching property}} if these conditions on \( A \) and \( B \) are sufficient to ensure the existence of a matching from \( A \) to \( B \). The question arises: which groups have the matching property? This question was addressed by Losonczy in \cite{Losonczy}:
\begin{theorem}\label{matching property}
    Let \( G \) be an abelian group. Then, \( G \) possesses the matching property if and only if \( G \) is torsion-free or cyclic of prime order.
\end{theorem}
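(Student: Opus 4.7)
The plan is to prove the two implications separately. For the necessity direction, I would argue the contrapositive by producing a universal counterexample from any finite subgroup $H \leq G$ with $2 \leq |H|$ and $H \neq G$. Every group $G$ that is neither torsion-free nor cyclic of prime order admits such an $H$: if $G$ is infinite with a nonzero torsion element $t$, take $H = \langle t \rangle$; if $G$ is cyclic of composite order $n$, take the unique subgroup of some prime order $p \mid n$; if $G$ is finite and noncyclic, take $H = \langle t \rangle$ for any $t \neq 0$ (proper since $G$ is not cyclic). Then pick $g \in G \setminus H$ and set $A = H$, $B = (H\setminus\{0\}) \cup \{g\}$. For any bijection $f : A \to B$, the matching condition at $0 \in A$ forces $f(0) \notin H$, hence $f(0) = g$; the remaining restriction $f : H \setminus \{0\} \to H \setminus \{0\}$ then yields $a + f(a) \in H = A$ for every $a \neq 0$, contradicting the matching condition.

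For the sufficiency direction, I would introduce the bipartite graph $\Gamma$ on parts $A, B$ with edges $\{(a,b) : a + b \notin A\}$ and reduce the problem to verifying Hall's marriage condition $|N_\Gamma(S)| \geq |S|$ for every $S \subseteq A$. The key reformulation is $B \setminus N_\Gamma(S) = T \cap B$, where
\[
T = \{b \in G : S + b \subseteq A\}.
\]
Since $0 \in T$ trivially and $0 \notin B$ by hypothesis, $|T \cap B| \leq |T| - 1$; the task is then to bound $|T|$ in terms of $|A|$ and $|S|$ in each of the two cases.

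In the cyclic prime case $G = \mathbb{Z}/p\mathbb{Z}$, Cauchy–Davenport gives $|T + S| \geq \min(p, |T|+|S|-1)$. Since $T+S \subseteq A$ and $A \neq G$ (otherwise $B = G$ would contain $0$), the minimum equals $|T| + |S| - 1 \leq |A|$, so $|T \cap B| \leq |T| - 1 \leq |A| - |S|$ and Hall's condition holds. In the torsion-free case, the subgroup generated by $A \cup B$ is finitely generated and torsion-free, hence isomorphic to some $\mathbb{Z}^n$, so we may assume $G = \mathbb{Z}^n$ endowed with a lex-compatible total order. I would then induct on $|A|$: if $B$ contains a positive element $b^+$, match $\max A$ to $b^+$ (the sum overshoots $\max A$ and so lies outside $A$); otherwise $B$ is entirely negative and one matches $\min A$ to any element of $B$. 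Removing the matched pair preserves both $|A|=|B|$ and $0 \notin B$, so the induction closes.

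The main obstacle I anticipate is the off-by-one subtlety in the cyclic prime case: Cauchy–Davenport alone yields $|T| \leq |A| - |S| + 1$, which is one short of what Hall's condition needs, and it is precisely the hypothesis $0 \notin B$ — entirely unused in the torsion-free argument — that has to be woven in at exactly this point to supply the missing unit.
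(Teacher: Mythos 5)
The paper does not prove this theorem itself; it quotes it from Losonczy's \emph{On matchings in groups}, so the comparison is with the standard argument there. Your necessity direction is correct, and is essentially the same counterexample the paper invokes in the remark following Theorem \ref{acyclic-cyclic}: with $A = H$ a proper nontrivial finite subgroup and $B = (H\setminus\{0\})\cup\{g\}$, the element $0\in A$ forces $f(0)=g$, after which the rest of $f$ maps $H\setminus\{0\}$ into itself. Your treatment of $\mathbb{Z}/p\mathbb{Z}$ via Hall's condition, the set $T=\{b : S+b\subseteq A\}$, Cauchy--Davenport, and the unit supplied by $0\in T$, $0\notin B$ is also correct and is the standard proof.

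The torsion-free case, however, has a genuine gap: the induction does not close. The inductive hypothesis yields a matching $f'\colon A'\to B'$ satisfying $a+f'(a)\notin A'$, but the matching condition for the original pair requires $a+f'(a)\notin A$, and $A\supsetneq A'$. Concretely, in $\mathbb{Z}$ take $A=\{0,3\}$ and $B=\{1,3\}$: your rule may match $\max A = 3$ with the positive element $1$ (indeed $3+1=4\notin A$), leaving $A'=\{0\}$, $B'=\{3\}$, whose recursive solution $0\mapsto 3$ is valid for $(A',B')$ yet gives $0+3=3\in A$. A matching does exist here ($3\mapsto 3$, $0\mapsto 1$), but your recursion is not guaranteed to find one. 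Your closing remark that $0\notin B$ is ``entirely unused'' in the torsion-free argument should itself have been a warning, since that hypothesis is indispensable there too. The repair is to run the torsion-free case through exactly the Hall/$T$-set framework you already set up for the prime case, replacing Cauchy--Davenport by the inequality $|S+T|\ge |S|+|T|-1$, valid for finite nonempty subsets of any totally ordered abelian group (list $s_1+t_1 < s_1+t_2 < \dots < s_1+t_m < s_2+t_m < \dots < s_l+t_m$); there is no $\min(p,\cdot)$ truncation to worry about, and $0\in T$, $0\notin B$ finish the count exactly as before. This uniform argument is how the cited proof proceeds.
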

For any matching $f:A\rightarrow B$, the associated multiplicity function $m_f : G \rightarrow \mathbb{Z}_{\geq 0}$ is defined via the rule:
\begin{align*}
  m_f(x) = |\{a \in A : a + f(a) = x\}|, 
\end{align*}
for all $x\in G$.

A matching $f : A \rightarrow B$ is called {\it{acyclic}} if for any matching $g: A \rightarrow B$, $m_f = m_g$ implies $f = g$. If there exists an acyclic matching from $A$ to $B$, we say $A$ is {\it{acyclically matched}} to $B$. A group $G$ has the {\it{acyclic matching property}} if for any pair of subsets $A$ and $B$ in $G$ with $|A| = |B|$ and $0 \notin B$, there is at least one acyclic matching from $A$ to $B$. Alon et al. proved in \cite{Alon} that the additive group \( \mathbb{Z}^n \) possesses the acyclic matching property. Losonczy extended this result to all abelian torsion-free groups by leveraging a total ordering compatible with the structure of abelian torsion-free groups, as established by Levi \cite{Levi}.
\begin{theorem}\label{a.m.p}
    Let $G$ be an abelian torsion-free group. Then G possesses
the acyclic matching property.
\end{theorem}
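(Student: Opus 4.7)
The plan is to reduce to the already-established case of $\mathbb{Z}^n$ from \cite{Alon}.

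Let $A,B\subseteq G$ be finite subsets with $|A|=|B|$ and $0\notin B$. I would first restrict attention to the subgroup $H:=\langle A\cup B\rangle \leq G$. Since $G$ is torsion-free, so is $H$; being finitely generated, the structure theorem for finitely generated abelian groups yields $H\cong \mathbb{Z}^n$ for some $n\geq 0$. The Alon et al.\ result now supplies an acyclic matching $f:A\to B$ inside $H$.

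The remaining step is to upgrade acyclicity from $H$ to the ambient group $G$. This is a formality: any matching $g:A\to B$ in $G$ is automatically a matching in $H$, because the defining condition $a+g(a)\notin A$ only involves elements and sums that already lie in $H$. Moreover, the multiplicity functions $m_f,m_g:G\to\mathbb{Z}_{\geq 0}$ are both supported on $H$, so the identities $m_f=m_g$ in $G$ and $m_f=m_g$ in $H$ are equivalent. Hence acyclicity of $f$ in $H$ forces $f=g$ whenever $g$ shares its multiplicity function with $f$ in $G$, establishing the acyclic matching property for $G$.

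The only nontrivial ingredient is the cited theorem of Alon et al.\ for $\mathbb{Z}^n$; the rest is a transfer argument whose key observation is that a finitely generated subgroup of a torsion-free abelian group is free abelian. An alternative closer in spirit to Losonczy's original proof would keep the argument inside $G$ by invoking Levi's translation-invariant total order and constructing an extremal matching directly—for instance, one maximizing the tuple $(f(a_n),f(a_{n-1}),\ldots,f(a_1))$ lexicographically, where $a_1<\cdots<a_n$ enumerates $A$, with existence secured by Theorem~\ref{matching property}. Acyclicity would then be checked by examining the largest index at which two matchings with equal multiplicity function could differ. I expect the main obstacle on that route to be verifying that the natural swap witnessing non-extremality actually preserves the matching condition $a+f(a)\notin A$, which demands more delicate bookkeeping than the transfer argument above.
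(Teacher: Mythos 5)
Your transfer argument is correct, and it proves the theorem by a genuinely different route from the one the paper invokes. The paper does not reprove this statement; it cites Losonczy, whose proof works directly inside $G$ by equipping the torsion-free abelian group with a translation-invariant total order (Levi's theorem) and then running the extremal/lexicographic construction of Alon et al.\ in that generality. You instead localize: since a matching $f:A\to B$ and its multiplicity function $m_f$ depend only on $A$, $B$, and sums of their elements, everything lives inside the finitely generated subgroup $H=\langle A\cup B\rangle$, which is free abelian because $G$ is torsion-free; the set of matchings from $A$ to $B$ is literally the same whether computed in $H$ or in $G$, and $m_f$ is supported on $A+B\subseteq H$, so acyclicity transfers verbatim. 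This is a clean and complete reduction that uses the Alon et al.\ theorem as a black box plus only the structure theorem for finitely generated abelian groups, whereas Losonczy's route requires reopening the proof of the $\mathbb{Z}^n$ case and checking that it only ever used the order structure; your version is arguably the more economical derivation given the literature as stated, while Losonczy's is self-contained at the level of the ordering axiom. Your closing sketch of the order-theoretic alternative correctly identifies both the mechanism and the delicate point (verifying that the swap witnessing non-extremality preserves the condition $a+f(a)\notin A$), which is indeed where the work lies in that approach.
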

The concept of matching was introduced by Fan and Losonczy in \cite{Fan} as a tool to investigate Wakeford's classical problem regarding canonical forms for symmetric tensors \cite{Wakeford}. In particular, let $V$ be a $q$-dimensional complex vector space, $S^p(V)$ be the symmetric tensor of degree $p$ and $A$ be a fixed set  consisting of $q(q-1)$ monomials in $S^p(V)$, where $i_k > 0$ for all $k$. It is proved in \cite{Fan} that the generic element of $S^p(V)$ can be conjugated, via an appropriate linear transformation, to an element whose support lies outside of $A$. The main tool to prove this is by establishing the existence of an acyclic matching in a specific weighted bipartite graph. A linear analogue of matchings in vector subspaces over field extensions was introduced in \cite{Eliahou}. The matroidal counterpart of matchings was explored in \cite{Aliabadi 3}.\\

In this paper, our objective is to establish a necessary and sufficient condition for an abelian group to possess the acyclic matching property. In view of Theorems \ref{matching property} and \ref{a.m.p}, it naturally prompts an inquiry into the acyclic matching property in \( \mathbb{Z}/p\mathbb{Z} \). This problem was initially explored in \cite{Aliabadi 1}, where it was proved that infinitely many primes \( p \) exist for which \( \mathbb{Z}/p\mathbb{Z} \) lacks the acyclic matching property. Particularly, it was shown that for the following two families of primes, \( \mathbb{Z}/p\mathbb{Z} \) does not exhibit the acyclic matching property:
\begin{itemize}
    \item \( p \equiv -1 \pmod{8} \) 
    \item The multiplicative order of $2$ modulo \( p \) is an odd number.
\end{itemize}
Considering the latter condition along with a result from \cite{Hasse} concerning the parity of the multiplicative order of \(2\) modulo \(p\), it is observed that primes \(p\) where \(\mathbb{Z}/p\mathbb{Z}\) lacks the acyclic matching property have a density of at least \(\frac{7}{24}\). Additionally, employing a rectification principle \cite{Lev}, which posits that a suitably small subset of \(\mathbb{Z}/p\mathbb{Z}\) can be embedded in integers while preserving certain additive properties, it is proved in \cite{Aliabadi 0} that for $A,B\subset \mathbb{Z}/p\mathbb{Z}$, $A$ is acyclically matched to $B$ provided that $0\notin B$ and $|A|=|B|\leq\sqrt{\log_2p}-1$.

\section{Main results}

In this section, we present our main results concerning the classification of all abelian groups with respect to the acyclic matching property. In particular, as outlined in Theorem \ref{acyclic-cyclic} and Theorem \ref{nes-suf}, we establish initially that for $p>5$, $\mathbb{Z}/p\mathbb{Z}$ lacks the acyclic matching property. Furthermore, we determine a necessary and sufficient condition for an abelian group to possess the acyclic matching property. We initiate our investigation by counting the matchings with the same multiplicity functions in $\mathbb{Z}/n\mathbb{Z}$.

\begin{lemma}\label{construction}
Let $m$ and $n$ be two integers with $m>1$ and $n \ge m + 4$.  Let $A=\mathbb{Z}/n\mathbb{Z} \setminus \{0, 1, 3\}$ and $B= \mathbb{Z}/n\mathbb{Z}\setminus \{0, 1, m\}$. Given a matching $f:A\rightarrow B$, we encode the multiplicity function $m_f$ as $c_0^{w_0} c_1^{w_1} c_3^{w_3}$ where the exponent of $c_k$ counts values $a \in A$ for which $a + f(a) = k$. Then the coefficients of the generating function $$\begin{pmatrix} c_1^2 c_3 & c_0 c_3^2 & c_1 c_3^2 \end{pmatrix} \begin{pmatrix} 0 & c_3 & 0 \\ c_1 & 0 & c_3 \\ c_0 & 0 & 0 \end{pmatrix}^{n-m-4} \begin{pmatrix} 1 & 0 & 0 \\ 0 & 1 & 0 \\ 0 & 0 & 0 \end{pmatrix} \begin{pmatrix} c_1 & 0 & c_3 \\ c_0 & 0 & 0 \\ 0 & c_0 & 0 \end{pmatrix}^{m-2} \begin{pmatrix} 1 \\ 0 \\ 0 \end{pmatrix}$$ count the matchings from $A$ to $B$ with the corresponding multiplicity function.\footnote{In fact this holds for $n > \max(m,3)$. Nevertheless, we refrain from addressing the additional cases to maintain simplicity in the proof.}
\end{lemma}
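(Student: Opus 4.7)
My strategy is a transfer matrix argument. I would first encode each matching $f \colon A \to B$ by the function $g \colon A \to \{0,1,3\}$ defined by $g(a) = a + f(a)$; the bijectivity requirement becomes that $a \mapsto g(a) - a \pmod n$ maps $A$ bijectively onto $B$, and the multiplicity vector is $(w_0, w_1, w_3) = (|g^{-1}(0)|, |g^{-1}(1)|, |g^{-1}(3)|)$. Counting matchings with prescribed multiplicities then reduces to counting such $g$, weighted by $\prod_{a \in A} c_{g(a)}$.

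I would then process the elements of $A$ in the order given by $k := n - a$ increasing through $\{1, 2, 3, \ldots, n-4, n-2\}$, skipping $k = n-1$ and $k = n-3$ (which correspond to the excluded elements $1$ and $3$ of $A$). At step $k$, the choice $s = g(n-k) \in \{0,1,3\}$ matches the current $a$-slot with the $b$-slot $k + s \pmod n$, admissible iff that $b$-slot lies in $B$ and is not already claimed. I would introduce a state at time $k$ that records which of the $b$-slots $k+1, k+2, k+3$ have been reserved by earlier choices. The requirement that every valid $b$-slot be matched exactly once then forces the reachable states to sit in a three-element subset of the naive state space $\{0,1\}^3$, producing the $3 \times 3$ matrices in the statement; a direct check in a small case (such as $n=10$, $m=5$, where enumeration and the matrix product both give $3 c_0 c_1^4 c_3^2 + 2 c_0^3 c_1 c_3^3$) provides a sanity check on the setup.

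In this setup, the column $C$ encodes the initial state; the matrix $M_2$, applied $m-2$ times, handles the generic transitions in the pre-$m$ region; the diagonal $D$ enforces at the appropriate time that the state in which $b$-slot $m$ would be marked reserved is forbidden (since $m \notin B$); the matrix $M_1$, applied $n-m-4$ times, handles the generic transitions in the post-$m$ region, where the relabeling induced by the $D$ step changes the identification of the three reachable configurations; and the row vector $R$, whose entries have degree $3$, bundles the final few $a$-slot choices near the wrap-around boundary, where the excluded $b$-slots $0, 1$ and the skipped $a$-slot at $n-3$ make the dynamics non-generic. The main obstacle, and the bulk of the work, is the explicit local case analysis verifying each matrix entry: identifying precisely the reachable three-element state space on each side of the $D$ step, matching the state labels across that step, and enumerating the allowed choice-sequences for the boundary $a$-slots that are encoded in $R$. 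Once these local checks are in place, the stated generating-function identity follows immediately by the usual transfer matrix formalism.
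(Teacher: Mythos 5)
Your strategy is essentially the paper's: a transfer-matrix count over a three-state automaton tracking a sliding window of slots, with exactly the same decomposition into an initial vector, $m-2$ generic steps, a rank-two projection accounting for $m \notin B$, $n-m-4$ generic steps in a relabelled state space, and a degree-three boundary row vector. The only structural difference is that you scan $A$ (in decreasing order via $k=n-a$) while the paper scans $B$ in increasing order, tracking which of $\{1-b,\,2-b,\,3-b\}$ are still unmatched; your $n=10$, $m=5$ sanity check is correct. The genuine shortfall is that everything you defer as ``explicit local case analysis'' --- deriving each matrix entry, identifying the three reachable states on each side of the projection and how they are matched across it, and enumerating the boundary matchings packed into the row vector --- \emph{is} the proof: the paper's argument consists precisely of writing out those transitions (for instance $s_1\to s_1$ with weight $c_1$, $s_1\to s_2$ with $c_0$, $s_2\to s_3$ with $c_0$, $s_3\to s_1$ with $c_3$ in the pre-$m$ region; the observation that state $s_3$ dies at $b=m$ because $3-m$ could then never be matched; and the three explicit terminal matchings from $s_{12}$, $s_{13}$, $s_{23}$). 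One ingredient you should make explicit when carrying this out is the mirror of the paper's remark that $-b$ cannot have been matched with any earlier $b'<b$: in your scan, the slot $k+3$ cannot have been reserved before step $k$, which is what justifies collapsing the state to a three-slot window at all. As it stands the proposal is a correct and well-aimed plan, but the decisive verification is left undone.
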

\begin{proof}
We match the elements of $B$ in ascending order. Each $b \in B$ can be matched with $\{-b, 1-b, 3-b\} \setminus \{0, 1, 3\}$. It is impossible that $-b$ has been matched with any $b' < b$, so the state we need to track when we reach $b$ is which of $\{1-b, 2-b, 3-b\}$ are unmatched. For $b < m$, exactly one of them is in $A$ and unmatched, so we have three possible states $s_k$ where we have not yet matched $k-b$. We begin the matching at $b=2$ in state $s_1$ because $2-b = 0$ and $3-b = 1$ are not in $A$. For elements $\{2, \ldots, m-1\} \subseteq B$ we apply transitions $s_1 \stackrel{c_1}\rightarrow s_1$, $s_1 \stackrel{c_0}\rightarrow s_2$, $s_2 \stackrel{c_0}\rightarrow s_3$, $s_3 \stackrel{c_3}\rightarrow s_1$.

Then we do not match $m$, so we have a change in states to always have two of $\{1-b, 2-b, 3-b\}$ unmatched; $s_1 \stackrel 1\rightarrow s_{12}$, $s_2 \stackrel 1\rightarrow s_{13}$. Note that there is no transition from $s_3$ because if $3-m$ is unmatched at this point it can never be matched.

Continuing for $b = m + 1$ to $b = n-4$ inclusive we have transitions $s_{12} \stackrel{c_0}\rightarrow s_{23}$, $s_{12} \stackrel{c_1}\rightarrow s_{13}$, $s_{13} \stackrel{c_3}\rightarrow s_{12}$, $s_{23} \stackrel{c_3}\rightarrow s_{13}$.

Finally we match $\{n-3, n-2, n-1\}$. From state $s_{12}$ we have to match as $f(4) = n-3$, $f(5) = n-2$, $f(2) = n-1$ with weight $c_1^2 c_3$. From state $s_{13}$ we have to match $f(6) = n-3$, $f(4) = n-1$, $f(2) = n-2$ with weight $c_0 c_3^2$. And from state $s_{23}$ we have to match $f(6) = n-3$, $f(5) = n-2$, $f(2) = n-1$ with weight $c_1 c_3^2$.
\end{proof}
\begin{remark}
    Notice that the characteristic polynomial of $\begin{pmatrix} 0 & c_3 & 0 \\ c_1 & 0 & c_3 \\ c_0 & 0 & 0 \end{pmatrix}$ is ${x^3 - c_1 c_3 x - c_0 c_3^2}$.
\end{remark}
 The following lemma will be employed to prove explicit expressions for specific $m$ by induction.

\begin{lemma}\label{binomial-recurrence} For $d, e \in \mathbb{Z}$ and $m \in \mathbb{N}$, $$F_n = \sum_{\substack{w_0 + w_1 + w_3 = n-3 \\ 2w_0 + w_1 + 1 = w_3+m}} \binom{w_0+w_1-d}{w_1-e} c_0^{w_0} c_1^{w_1} c_3^{w_3}$$ satisfies a linear recurrence with characteristic polynomial ${x^3 - c_1 c_3 x - c_0 c_3^2}$.
\end{lemma}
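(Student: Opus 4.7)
The conclusion of the lemma unpacks to the three-term recurrence
\[
F_n \;=\; c_1 c_3\,F_{n-2} \;+\; c_0 c_3^2\,F_{n-3},
\]
and my plan is to verify it by comparing the coefficient of each monomial $c_0^{w_0} c_1^{w_1} c_3^{w_3}$ on the two sides, ultimately reducing everything to one application of Pascal's identity.

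First I would re-express $c_1 c_3\,F_{n-2}$ via the index substitution $(w_0,w_1,w_3) \mapsto (w_0, w_1-1, w_3-1)$: the prefactor $c_1 c_3$ absorbs the shift of exponents, and a short arithmetic check shows that under this substitution the two defining constraints of $F_{n-2}$ become precisely those of $F_n$, namely $w_0+w_1+w_3 = n-3$ and $2w_0+w_1+1 = w_3+m$. An entirely analogous substitution $(w_0,w_1,w_3) \mapsto (w_0-1, w_1, w_3-2)$ rewrites $c_0 c_3^2\,F_{n-3}$ as a sum over the same index set. Thus $c_1 c_3\,F_{n-2} + c_0 c_3^2\,F_{n-3}$ becomes
\[
\sum \Bigl[\binom{w_0+w_1-1-d}{w_1-1-e} + \binom{w_0+w_1-1-d}{w_1-e}\Bigr] c_0^{w_0} c_1^{w_1} c_3^{w_3},
\]
where the sum ranges over the same index set as $F_n$. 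Pascal's identity immediately collapses the bracket to $\binom{w_0+w_1-d}{w_1-e}$, so the right-hand side equals $F_n$ term by term.

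The routine work is the arithmetic check that each shift preserves both constraint equations. The main obstacle is the boundary bookkeeping: for triples $(w_0,w_1,w_3)$ in $F_n$'s index set whose shifts fall outside $\mathbb{Z}_{\ge 0}^{3}$—typically when $w_1 = 0$, $w_0 = 0$, or $w_3 \in \{0,1\}$—one of the two contributions on the right is formally absent, and the identity will only balance because the corresponding binomial coefficient vanishes under the standard convention $\binom{N}{K} = 0$ for $K<0$ or $0 \le N < K$. I would handle these finitely many boundary configurations as a small checklist at the end of the proof, confirming case by case that the missing term is exactly the one that Pascal's identity forces to be zero.
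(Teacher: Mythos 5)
Your proposal is correct and follows essentially the same route as the paper's proof: the paper likewise extracts the coefficient of $c_0^{w_0}c_1^{w_1}c_3^{w_3}$ from $c_1 c_3 F_{n-2} + c_0 c_3^2 F_{n-3}$ via exactly the two index shifts $(w_0,w_1-1,w_3-1)$ and $(w_0-1,w_1,w_3-2)$, checks that both constraints are preserved, and concludes with Pascal's identity. The boundary bookkeeping you flag is handled in the paper implicitly through Iverson-bracket notation and the vanishing convention for binomial coefficients, which amounts to the same checklist you describe.
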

\begin{proof}
Using square brackets enclosing a monomial (e.g. $[c_0^{w_0} c_1^{w_1} c_3^{w_3}]$) to denote the coefficient extraction operator and square brackets enclosing a boolean expression (e.g. $[w_0 + w_1 + w_3 = n-3]$) to denote the Iverson bracket which evaluates to $1$ when the expression is true and $0$ otherwise, we have
\begin{eqnarray*}
&& [c_0^{w_0} c_1^{w_1} c_3^{w_3}] c_1 c_3 \sum_{\substack{u_0 + u_1 + u_3 = n-5 \\ 2u_0 + u_1 + 1 = u_3+m}} c_0^{u_0} c_1^{u_1} c_3^{u_3} \binom{u_0+u_1-d}{u_1-e} + \\
&& [c_0^{w_0} c_1^{w_1} c_3^{w_3}] c_0 c_3^2 \sum_{\substack{v_0 + v_1 + v_3 = n-6 \\ 2v_0 + v_1 + 1 = v_3+m}} c_0^{v_0} c_1^{v_1} c_3^{v_3} \binom{v_0+v_1-d}{v_1-e} \\
&=& [w_0 + w_1 + w_3 = n-3][2w_0+w_1+1=w_3+m] \binom{w_0+w_1-d-1}{w_1-e-1} + \\
&& [w_0 + w_1 + w_3 = n-3][2w_0 + w_1 + 1 = w_3+m] \binom{w_0+w_1-d-1}{w_1-e} \\
&=& [w_0 + w_1 + w_3 = n-3][2w_0+w_1+1=w_3+m] \binom{w_0+w_1-d}{w_1-e}.
\end{eqnarray*}
\end{proof}

\begin{lemma}\label{case-m=2}
    If we take $m=2$ in the construction of Lemma \ref{construction} then for integer $n \ge 6$ the generating function for the number of matchings in $\mathbb{Z}/n\mathbb{Z}$ is $$\sum_{\substack{w_0 + w_1 + w_3 = n-3 \\ 2w_0 + w_1 + 1 = w_3+2}} \binom{w_0+w_1}{w_1} c_0^{w_0} c_1^{w_1} c_3^{w_3}.$$
\end{lemma}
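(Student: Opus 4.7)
The plan is to recognize the right-hand side of Lemma \ref{case-m=2} as the solution of a common linear recurrence satisfied by the matrix product of Lemma \ref{construction}, and then pin both sequences down by checking a small number of base cases. Setting $m = 2$ in the generating function of Lemma \ref{construction}, the factor $Q^{m-2}$ becomes the identity and the projector $P$ fixes $(1,0,0)^{T}$, so the generating function collapses to
\[
G_n = \begin{pmatrix} c_1^2 c_3 & c_0 c_3^2 & c_1 c_3^2 \end{pmatrix} M^{n-6} \begin{pmatrix} 1 \\ 0 \\ 0 \end{pmatrix},
\]
where $M$ is the middle matrix, whose characteristic polynomial is $x^3 - c_1 c_3 x - c_0 c_3^2$ by the remark following Lemma \ref{construction}.

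By the Cayley--Hamilton theorem applied to $M$, every entry of $M^{k}$, and hence $G_n$ itself, satisfies the third-order linear recurrence
\[
T_n = c_1 c_3\, T_{n-2} + c_0 c_3^{2}\, T_{n-3}.
\]
On the other hand, Lemma \ref{binomial-recurrence} applied with $m = 2$ and $d = e = 0$ shows that the claimed closed form
\[
F_n = \sum_{\substack{w_0 + w_1 + w_3 = n-3 \\ 2w_0 + w_1 + 1 = w_3 + 2}} \binom{w_0 + w_1}{w_1} c_0^{w_0} c_1^{w_1} c_3^{w_3}
\]
satisfies the same recurrence.

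Since $G_n$ and $F_n$ obey a common third-order linear recurrence, it suffices to verify that they coincide for three consecutive base values, for which I would take $n = 6, 7, 8$. The matrix product on the left is immediate: $G_6 = c_1^{2} c_3$, $G_7 = 2 c_0 c_1 c_3^{2}$, $G_8 = c_1^{3} c_3^{2} + c_0^{2} c_3^{3}$. For the right-hand side, eliminating $w_0$ from the two constraints yields $w_1 + 3 w_3 = 2n - 7$ together with $w_0 + w_1 + w_3 = n - 3$, so only finitely many nonnegative triples contribute, and a brief enumeration shows agreement with $G_6, G_7, G_8$.

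There is no real obstacle here; the content is the two-line observation that Cayley--Hamilton and Lemma \ref{binomial-recurrence} produce matching recurrences, after which the proof is bookkeeping on three base cases. The one place to be careful is ensuring that the $m = 2$ specialisation of Lemma \ref{construction} really eliminates the $Q^{m-2}$ factor and that the resulting indexing shift is correct so that $n = 6$ gives $M^{0} = I$ as the starting point.
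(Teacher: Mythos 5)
Your proposal is correct and follows essentially the same route as the paper: the authors likewise verify the initial terms $c_1^2c_3$, $2c_0c_1c_3^2$, and $c_1^3c_3^2+c_0^2c_3^3$ for $n=6,7,8$ and then invoke Lemma \ref{binomial-recurrence} to complete the induction. The only difference is that you make explicit the Cayley--Hamilton justification for why the matrix-product side obeys the recurrence $T_n = c_1c_3T_{n-2}+c_0c_3^2T_{n-3}$, which the paper leaves implicit in the remark on the characteristic polynomial.
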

\begin{proof}
	The initial terms $c_1^2 c_3$ for $n=6$, $2c_0 c_1 c_3^2$ for $n=7$, and $c_1^3 c_3^2 + c_0^2 c_3^3$ for $n=8$ can be manually verified; and then Lemma \ref{binomial-recurrence} completes a proof by induction.
\end{proof}

\begin{lemma}\label{case-m=6}
    If we take $m=6$ in the construction of Lemma \ref{construction} then for integer $n \ge 10$ the generating function for the number of matchings in $\mathbb{Z}/n\mathbb{Z}$ is $$\sum_{\substack{w_0 + w_1 + w_3 = n-3 \\ 2w_0 + w_1 + 1 = w_3+6}} \left( \binom{w_0+w_1-2}{w_1} + \binom{w_0+w_1-3}{w_1-1} + \binom{w_0+w_1-3}{w_1-3} \right) c_0^{w_0} c_1^{w_1} c_3^{w_3}.$$
\end{lemma}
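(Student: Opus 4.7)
The plan is to follow the same strategy as the proof of Lemma \ref{case-m=2}: exhibit a third-order linear recurrence in $n$ satisfied by both sides and then check three consecutive base cases. Let $M := \begin{pmatrix} 0 & c_3 & 0 \\ c_1 & 0 & c_3 \\ c_0 & 0 & 0 \end{pmatrix}$; by the remark following Lemma \ref{construction}, $M$ has characteristic polynomial $x^3 - c_1 c_3 x - c_0 c_3^2$, so Cayley-Hamilton gives $M^k = c_1 c_3 M^{k-2} + c_0 c_3^2 M^{k-3}$ for $k \ge 3$. Since the only $n$-dependent factor in the matrix product of Lemma \ref{construction} is $M^{n-m-4}$, the matching generating function $G_n$ satisfies $G_n = c_1 c_3 G_{n-2} + c_0 c_3^2 G_{n-3}$ for all $n \ge m + 7$.

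Next, applying Lemma \ref{binomial-recurrence} with $m = 6$ and the three parameter pairs $(d,e) \in \{(2,0),\,(3,1),\,(3,3)\}$ shows that each of the three summands of the claimed closed form defines a sequence in $n$ satisfying the same recurrence $F_n = c_1 c_3 F_{n-2} + c_0 c_3^2 F_{n-3}$. By linearity, the sum on the right-hand side of Lemma \ref{case-m=6} also satisfies it.

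It then remains to verify the three base cases $n = 10, 11, 12$. For each one I would expand the matrix product of Lemma \ref{construction} directly, precomputing the fixed factor $(M')^{m-2} = (M')^4$ with $M' := \begin{pmatrix} c_1 & 0 & c_3 \\ c_0 & 0 & 0 \\ 0 & c_0 & 0 \end{pmatrix}$, and compare with the proposed closed form restricted to the finitely many nonnegative triples $(w_0, w_1, w_3)$ satisfying $w_0 + w_1 + w_3 = n - 3$ and $2w_0 + w_1 + 1 = w_3 + 6$. The main obstacle is bookkeeping: the $m=6$ case involves a noticeably larger fixed right-hand matrix power than $m=2$, so the base-case polynomials carry more monomials, but the computations remain finite and mechanical.
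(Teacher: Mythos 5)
Your proposal is correct and matches the paper's (very terse) argument: verify the base cases $n=10,11,12$ directly from the construction of Lemma \ref{construction}, and apply Lemma \ref{binomial-recurrence} with $(d,e)\in\{(2,0),(3,1),(3,3)\}$ together with the Cayley--Hamilton recurrence $G_n = c_1 c_3 G_{n-2} + c_0 c_3^2 G_{n-3}$ to complete the induction. Your write-up simply makes explicit the details that the paper leaves implicit.
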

\begin{proof}
	Again, the initial terms can be manually verified and Lemma \ref{binomial-recurrence} completes a proof by induction.
\end{proof}

\begin{lemma}\label{general case}
    If $n>5$ is coprime to $6$, then $\mathbb{Z}/n\mathbb{Z}$ does not possess the acyclic matching property.
\end{lemma}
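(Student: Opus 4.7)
The plan is to exhibit subsets $A, B \subseteq \mathbb{Z}/n\mathbb{Z}$ with $|A|=|B|=n-3$ and $0\notin B$ such that no matching from $A$ to $B$ is acyclic. The key observation is: if the generating function of Lemma \ref{construction} has every nonzero coefficient equal to at least $2$, then any matching $f$ shares its multiplicity function $m_f$ with some second matching $g\neq f$, so $f$ fails to be acyclic. Thus the whole task reduces to a numerical claim about coefficients.

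I will take $A=\mathbb{Z}/n\mathbb{Z}\setminus\{0,1,3\}$ throughout, and choose $B$ according to the residue of $n$ modulo $6$. Since $\gcd(n,6)=1$ and $n>5$, either $n\equiv 1\pmod 6$, in which case I set $m=2$ and invoke Lemma \ref{case-m=2}, or $n\equiv 5\pmod 6$, in which case I set $m=6$ (noting $n\ge 11\ge m+4$) and invoke Lemma \ref{case-m=6}. In either case the index constraints $w_0+w_1+w_3=n-3$ and $2w_0+w_1+1=w_3+m$ combine to $3w_0+2w_1=n-2$ (when $m=2$) or $3w_0+2w_1=n+2$ (when $m=6$).

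For $m=2$, the coefficient is $\binom{w_0+w_1}{w_1}$, which equals $1$ exactly when $w_0=0$ or $w_1=0$. The first forces $n-2$ to be even, contradicting $n$ odd; the second forces $3\mid n-2$, contradicting $n\equiv 1\pmod 3$. So every positive coefficient is at least $2$. For $m=6$, the coefficient is
\[
\binom{w_0+w_1-2}{w_1}+\binom{w_0+w_1-3}{w_1-1}+\binom{w_0+w_1-3}{w_1-3},
\]
a sum of nonnegative integers. It equals $1$ only if exactly one summand equals $1$ and the other two vanish. Enumerating the ways each binomial can equal $1$ (top equals bottom, or bottom equals $0$) and requiring the other two to vanish leaves a short list of candidate $(w_0,w_1)$, each of which is incompatible with $3w_0+2w_1=n+2$ under $n\equiv 5\pmod 6$: the subcase $w_1=0$ needs $3\mid n+2$, contradicting $n\equiv 2\pmod 3$; the subcase $w_0=0$ needs $n+2$ even, contradicting $n$ odd; the remaining sporadic subcases $(w_0,w_1)\in\{(2,0),(1,3)\}$ give $n\in\{4,7\}$, both outside the range.

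The main obstacle is the $m=6$ case analysis: I must carefully enumerate the several ways the three-term binomial sum can equal $1$ and rule out each via the arithmetic constraints just described. Once this is done, combining the two cases yields that for every $n>5$ coprime to $6$, the chosen $A,B$ admit no acyclic matching, hence $\mathbb{Z}/n\mathbb{Z}$ lacks the acyclic matching property.
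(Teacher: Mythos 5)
Your proposal is correct and follows essentially the same route as the paper: choose $m=2$ or $m=6$ according to $n \bmod 6$, reduce the index constraints to $3w_0+2w_1=n-2$ or $n+2$, and show every coefficient of the generating function is $0$ or at least $2$. The only difference is cosmetic: in the $m=6$ case the paper observes directly that the constraints force $w_0$ odd and $w_1\equiv 2\pmod 3$, so no individual binomial can equal $1$, whereas you enumerate the configurations where the three-term sum could equal $1$ and rule each out — both arguments are valid.
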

\begin{proof}
	We assume that $A$ and $B$ are as in Lemma \ref{construction}. We prove that $A$ cannot be acyclically matched to $B$. We split into two cases:
    \medskip

{\bf Case 1}.
    If $n = 6k+1$ then we take $m=2$ in the construction of Lemma \ref{construction}. By Lemma \ref{case-m=2} we have generating function:$$\sum_{\substack{w_0 + w_1 + w_3 = 6k-2 \\ 2w_0 + w_1 = w_3+1}} \binom{w_0+w_1}{w_1} c_0^{w_0} c_1^{w_1} c_3^{w_3}.$$\\
    For an acyclic matching to occur, we require a binomial coefficient which is equal to $1$, so either $w_0 = 0$ or $w_1 = 0$. But the linear relationships $w_0 + w_1 + w_3 = 6k-2$ and $2w_0 + w_1 = w_3+1$ combine to yield $3w_0 + 2w_1 = 6k-1$, so modular considerations rule out both possibilities.
\medskip

{\bf Case 2}.
    If $n = 6k+5$ we proceed similarly with $m=6$ and Lemma \ref{case-m=6}. This time the coefficients are the sum of three binomial coefficients. However, none of those binomial coefficients can be $1$ for integer exponents $w_0, w_1, w_3$ which satisfy the constraints of the sum, and since they are non-negative their sum can likewise never be $1$.
\end{proof}
\begin{remark}
    In Lemma \ref{general case} we come up with two sets $A$ and $B$ of size $n-3$ that are not acyclically matched. However, Example 2.2 in \cite{Aliabadi 0} demonstrates that in an abelian group $G$ if $|A|=|B|=|G|-1$ or $|A|=|B|=|G|-2$, $A$ must be acyclically matched with $B$. Therefore, $A$ and $B$ in Lemma \ref{general case} represent the largest possible sets that fail to exhibit acyclic matchings.
\end{remark}
In the following theorem, we classify all cyclic groups of prime order with respect to the acyclic matching property. This lemma gives a negative answer to Question 2.4 in \cite{Aliabadi 1}, and serves as the ``{\it{finite cyclic group}}''
 analogue of Theorem 1 in \cite{Alon} and Theorem 4.1 in \cite{Losonczy}.

\begin{theorem}\label{acyclic-cyclic}
    If \( p > 5 \) is prime, then \( \mathbb{Z}/p\mathbb{Z} \) does not possess the acyclic matching property.
\end{theorem}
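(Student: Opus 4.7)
The plan is essentially to observe that this theorem is an immediate corollary of Lemma \ref{general case}. Any prime $p > 5$ is different from $2$ and $3$, so $\gcd(p,6) = 1$, which means $p$ satisfies the hypothesis of Lemma \ref{general case} with $n = p$. Invoking that lemma directly gives the conclusion that $\mathbb{Z}/p\mathbb{Z}$ fails to possess the acyclic matching property.

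In more detail, the witnesses to the failure are already produced by the construction in Lemma \ref{construction}: one takes $A = \mathbb{Z}/p\mathbb{Z} \setminus \{0,1,3\}$ and $B = \mathbb{Z}/p\mathbb{Z} \setminus \{0,1,m\}$, where $m = 2$ if $p \equiv 1 \pmod 6$ and $m = 6$ if $p \equiv 5 \pmod 6$. These are the only two residue classes modulo $6$ available to a prime $p > 5$, so the case split used in the proof of Lemma \ref{general case} covers every prime $p > 5$, and the generating-function analysis there shows no matching from $A$ to $B$ can be acyclic.

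There is no real obstacle, since the heavy lifting, namely the encoding of matchings via the transfer matrix of Lemma \ref{construction} and the binomial identities of Lemmas \ref{binomial-recurrence}, \ref{case-m=2}, and \ref{case-m=6}, has already been performed. The only step is to verify the congruence dichotomy $p \equiv 1 \pmod 6$ or $p \equiv 5 \pmod 6$ for primes $p > 5$ and cite Lemma \ref{general case}. I would therefore write the proof as a one-line deduction.
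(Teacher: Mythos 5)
Your proposal is correct and matches the paper's proof exactly: the paper also derives Theorem \ref{acyclic-cyclic} as an immediate consequence of Lemma \ref{general case}, since any prime $p>5$ is coprime to $6$. Your additional remarks spelling out the residue-class dichotomy and the explicit witness sets are accurate elaborations of the same one-line deduction.
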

\begin{proof}
The proof is immediate from Lemma \ref{general case}.
\end{proof}

\begin{remark}
    It is worth mentioning that Lemma \ref{general case} and Theorem \ref{acyclic-cyclic} are applicable in a more general setting. In essence, we can assert that for $n > 5$, $\mathbb{Z}/n\mathbb{Z}$ lacks the acyclic matching property. The case where $n$ is prime follows directly from Theorem \ref{acyclic-cyclic}. For non-prime $n$, choose $a$ such that $\gcd(a, n) \neq 1$. Choose $x \in \mathbb{Z}/n\mathbb{Z} \setminus \langle a \rangle$ and set $A := \langle a \rangle$ and $B := (\langle a \rangle \cup \{x\}) \setminus \{0\}$. Then $A$ is not matched to $B$, let alone acyclically matched. Therefore, $\mathbb{Z}/n\mathbb{Z}$ does not possess the acyclic matching property. With all this, we are primarily interested in the scenario where $n$ is prime, in view of Theorem \ref{matching property}.
\end{remark}

We are now ready to provide a necessary and sufficient condition for abelian groups to possess the acyclic matching property. This theorem serves as the ``{\it{acyclic matching property}}''
 analogue of Theorem \ref{matching property}.
\begin{theorem}\label{nes-suf}
    An abelian group \( G \) possesses the acyclic matching property if and only if either \( G \) is torsion-free or \( G = \mathbb{Z}/p\mathbb{Z} \), where \( p \in \{2,3,5\} \).
\end{theorem}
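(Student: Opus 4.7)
The plan is to prove both directions by reducing to results already established in the paper. The nontrivial content has really been done already: Theorem \ref{a.m.p} handles torsion-free groups, Theorem \ref{matching property} pins down which groups have the matching property at all, and Theorem \ref{acyclic-cyclic} (via Lemma \ref{general case}) kills all primes $p > 5$. So the proof should be essentially a bookkeeping argument, plus a short verification for the tiny cyclic groups.

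For the ``only if'' direction, I would observe that any acyclic matching is a matching, so the acyclic matching property implies the matching property. Theorem \ref{matching property} then forces $G$ to be either torsion-free or isomorphic to $\mathbb{Z}/p\mathbb{Z}$ for some prime $p$. In the latter case, Theorem \ref{acyclic-cyclic} rules out $p > 5$, leaving exactly $p \in \{2,3,5\}$.

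For the ``if'' direction, the torsion-free case is Theorem \ref{a.m.p}, so I only need to verify the three small cyclic groups. The remark following Lemma \ref{general case} (attributed to Example 2.2 of \cite{Aliabadi 0}) already tells us that whenever $|A| = |B| \in \{|G|-1,\,|G|-2\}$, $A$ is automatically acyclically matched to $B$. This handles every admissible cardinality for $\mathbb{Z}/2\mathbb{Z}$ and $\mathbb{Z}/3\mathbb{Z}$, and it handles sizes $3$ and $4$ for $\mathbb{Z}/5\mathbb{Z}$. The only residual cases are $|A| = |B| \in \{1,2\}$ in $\mathbb{Z}/5\mathbb{Z}$ (and size $2$ in $\mathbb{Z}/3\mathbb{Z}$, already covered). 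Size $1$ is trivial because the matching is unique. For size $2$, writing $A=\{a_1,a_2\}$ and $B=\{b_1,b_2\}$, the only two candidate bijections are $f\colon a_i \mapsto b_i$ and $g\colon a_i \mapsto b_{3-i}$; their multiplicity functions are supported on $\{a_1+b_1,a_2+b_2\}$ and $\{a_1+b_2,a_2+b_1\}$ respectively, and equality of these multisets quickly forces $a_1 = a_2$ or $b_1 = b_2$, a contradiction. Hence any matching in the size-$2$ case is automatically acyclic, and since Theorem \ref{matching property} guarantees at least one matching exists, we are done.

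I do not expect any genuine obstacle: the substantive work is already packaged into the earlier theorems, and what remains is a short case analysis. The one place to be careful is to make sure the two ranges $|A| \leq 2$ and $|A| \geq |G|-2$ cover all admissible cardinalities $1 \leq |A| \leq |G|-1$ for each of $p \in \{2,3,5\}$, which they do exactly because $|G|-3 \leq 2$ in these three cases.
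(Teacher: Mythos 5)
Your proposal is correct, and the ``only if'' direction coincides exactly with the paper's: acyclic matching property $\Rightarrow$ matching property, then Theorem \ref{matching property} restricts $G$ to torsion-free or $\mathbb{Z}/p\mathbb{Z}$, and Theorem \ref{acyclic-cyclic} eliminates $p>5$. Where you diverge is the verification that $\mathbb{Z}/p\mathbb{Z}$ for $p\in\{2,3,5\}$ actually has the property: the paper simply cites computational results (Algorithm 3 of \cite{Aliabadi 2}), whereas you give a hands-on case analysis, combining the fact (Example 2.2 of \cite{Aliabadi 0}, restated in the remark after Lemma \ref{general case}) that sizes $|G|-1$ and $|G|-2$ are always acyclically matchable with a direct argument for sizes $1$ and $2$. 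Your size-$2$ argument is sound: equality of the multisets $\{a_1+b_1,a_2+b_2\}$ and $\{a_1+b_2,a_2+b_1\}$ forces $a_1=a_2$ or $b_1=b_2$, so distinct matchings have distinct multiplicity functions and every matching is acyclic; existence of some matching then follows from Theorem \ref{matching property}. Your observation that $|G|-3\le 2$ for $p\le 5$ is precisely why the two ranges cover everything, and is the structural reason these three primes survive. What your route buys is a proof that is self-contained modulo the cited Example 2.2, and in fact your small-size argument is group-independent (any matching between two-element sets in any abelian group is acyclic), which is slightly more information than the paper extracts; what the paper's route buys is brevity. Either way the theorem stands.
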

\begin{proof}
    
    Suppose \( G \) possesses the acyclic matching property. Then it also satisfies the matching property. From Theorem \ref{matching property}, it follows that \( G \) is either torsion-free or cyclic of prime order. However, according to Lemma \ref{acyclic-cyclic}, if \( G \) is cyclic of prime order with acyclic matching property, then \( p\leq 5 \), as claimed. \\
    Conversely, it follows from Theorem \ref{a.m.p} that torsion-free groups possess the acyclic matching property. Also, the simulation results in \cite[page 153, Algorithm 3]{Aliabadi 2} demonstrate that \( \mathbb{Z}/p\mathbb{Z} \) for \( p = 2,3,5 \) exhibits the acyclic matching property, thereby completing the proof.
\end{proof}
 
\textbf{Data sharing:} Data sharing not applicable to this article as no datasets were generated or analysed.\\
\textbf{Conflict of interest:} To our best knowledge, no conflict of interests, whether of financial or personal nature, has influenced the work presented in this article.\\
\textbf{Acknowledgement:} We wish to thank two anonymous referees for their careful reading of the manuscript, their comments, corrections and suggestions for improvement.

\end{document}